\theoremstyle{plain}
\newtheorem{theo}{\indent\textbf Theorem}[section]
\newtheorem{lemma}[theo]{\indent\textbf Lemma}
\newtheorem{coro}[theo]{\indent\textbf Corollary}
\newtheorem{prop}[theo]{\indent\textbf Proposition}
\theoremstyle{definition} 
\newtheorem{defi}[theo]{\indent\textbf Definition}
\newcommand{\vna}{von Neumann algebra}
\newcommand{\node}{non-degenerate}
\newcommand{\sufa}{subfactor}
\newcommand{\ucp}{unital completely positive}
\newcommand{\syenin}{symmetric enveloping inclusion}
\newcommand{\stin}{standard invariant}
\newcommand{\tII}{type II$_1$}
\newcommand{\TLJ}{Temperley-Lieb-Jones}
\newcommand{\lopa}{loop parameter}
\newcommand{\plal}{planar algebra}
\newcommand{\suplal}{subfactor planar algebra}
\newcommand{\stre}{$*$-representation}
\newcommand{\Hapr}{Haagerup property}
\newcommand{\reHapr}{relative Haagerup property}
\newcommand{\HPmo}{Hilbert $\Pl$-module}
\newcommand{\HTLJmo}{Hilbert $TLJ$-module}
\newcommand{\lowe}{lowest weight}
\newcommand{\Hisp}{Hilbert space}
\newcommand{\GNS}{Gelfand-Naimark-Segal}
\newcommand{\NM}{{N\subset M}}
\newcommand{\TS}{{T\subset S}}
\newcommand{\op}{\text{op}}
\newcommand{\MMop}{M\overline\otimes M^\op}
\newcommand{\MboxM}{M\boxtimes M}
\newcommand{\G}{\mathcal G}
\newcommand{\mH}{\mathcal H}
\newcommand{\Pl}{\mathcal P}
\newcommand{\Bl}{\mathcal B}
\newcommand{\C}{\mathbb C}
\newcommand{\N}{\mathbb N}
\newcommand{\K}{\mathcal K}
\newcommand{\loriar}{\longrightarrow}
\newcommand{\ootimes}{\overline\otimes}
\newcommand{\GrP}{Gr\Pl}
\newcommand{\GrPboxGrP}{\GrP\boxtimes\GrP}
\newcommand{\gO}{\geqslant 0}
\newcommand{\diagxdag}{\begin{tikzpicture}[baseline = .4cm]
\draw (0,-.5)--(0,1.5)--(3,1.5)--(3,-.5)--(0,-.5);\draw (1,0)--(1,1)--(2,1)--(2,0)--(1,0);\draw (0,.5)--(1,.5);\draw (2,.5)--(3,.5);\draw (1.5,1.5)--(1.5,1);\draw (1.5,0)--(1.5,-.5);\node at (1.5,.5) { $x^*$ };\node at (2.1,1.1) { \$ };\node at (.2,1.3) { \$ };
\end{tikzpicture}}
\newcommand{\xstarky}{
x\star_k y = \sum_{a=0}^{ \min(2n,2i) } \sum_{ b=0 }^{ \min(2m,2j) } \begin{tikzpicture}[baseline=.4cm]
\draw (0,-.75)--(5,-.75)--(5,1.75)--(0,1.75)--(0,-.75);\draw (1,0)--(2,0)--(2,1)--(1,1)--(1,0);\draw (3,0)--(4,0)--(4,1)--(3,1)--(3,0);\draw (0,.5)--(1,.5);\draw (2,.5)--(3,.5);\draw (4,.5)--(5,.5);\draw (1.5,-.75)--(1.5,0);\draw (1.5,1)--(1.5,1.75);\draw (3.5,-.75)--(3.5,0);\draw (3.5,1)--(3.5,1.75);\draw (1.75,1) arc (160 : 20 : .75);\draw (1.75,0) arc (-160 : -20 : .75);\node at (1.5,.5) { $ x $ };\node at (3.5,.5) { $ y $ };\node at (2.5 , 1.25) {{ \scriptsize{$a$} }};\node at (2.5 ,-.25 ) {{ \scriptsize{$b$} }};\node at (.5 ,.65 ) {{ \scriptsize{$2k$} }};\node at (2.5 ,.65 ) {{ \scriptsize{$2k$} }};\node at (4.5 ,.65 ) {{ \scriptsize{$2k$} }};
\end{tikzpicture}}
\newcommand{\pixxi}{
\pi_0(x)\xi = \sum_{a,b} \begin{tikzpicture}[baseline=.4cm]
\draw (0,-.75)--(5,-.75)--(5,1.75)--(0,1.75)--(0,-.75);\draw (1,0)--(2,0)--(2,1)--(1,1)--(1,0);\draw (3,0)--(4,0)--(4,1)--(3,1)--(3,0);\draw (1.5,-.75)--(1.5,0);\draw (1.5,1)--(1.5,1.75);\draw (3.5,-.75)--(3.5,0);\draw (3.5,1)--(3.5,1.75);\draw (1.75,1) arc (160 : 20 : .75);\draw (1.75,0) arc (-160 : -20 : .75);\node at (1.5,.5) { $ x $ };\node at (3.5,.5) { $ \xi $ };\node at (2.5 , 1.25) {{ \scriptsize{$a$} }};\node at (2.5 ,-.25 ) {{ \scriptsize{$b$} }};
\end{tikzpicture}}
\newcommand{\xyxi}{\begin{tikzpicture}[baseline = 1.25cm]
\draw (0,0)--(4,0)--(4,3)--(0,3)--(0,0);\draw (.5,.5)--(1.5,.5)--(1.5,1.25)--(.5,1.25)--(.5,.5);\draw(.5,1.75)--(1.5,1.75)--(1.5,2.5)--(.5,2.5)--(.5,1.75);\draw (2.5,1)--(3.5,1)--(3.5,2)--(2.5,2)--(2.5,1);\draw (1,0)--(1,.5);\draw (1,2.5)--(1,3);\node at (1,.8) { $y$ };\node at (1,2.1) { $x$ };\node at (3,1.5) { $\xi$ };
\end{tikzpicture}}
\newcommand{\xixy}{\begin{tikzpicture}[baseline = 1.25cm]
\draw (0,0)--(4,0)--(4,3)--(0,3)--(0,0);\draw (2.5,.5)--(3.5,.5)--(3.5,1.25)--(2.5,1.25)--(2.5,.5);\draw (2.5,1.75)--(3.5,1.75)--(3.5,2.5)--(2.5,2.5)--(2.5,1.75);\draw (.5,1)--(1.5,1)--(1.5,2)--(.5,2)--(.5,1);\draw (3,0)--(3,.5);\draw (3,2.5)--(3,3);\node at (3,.8) { $y$ };\node at (3,2.1) { $x$ };\node at (1,1.5) { $\xi$ };
\end{tikzpicture}}
\newcommand{\diage}{e=\frac{1}{\delta}\ 
\begin{tikzpicture}[baseline=.4cm]
\draw(0,0)--(1.5,0)--(1.5,1)--(0,1)--(0,0);\draw (0,.25) arc (-90:90:.25);\draw (1.5,.75) arc (90:270:.25);
\end{tikzpicture}}
\newcommand{\prodscal}{\langle\ \begin{tikzpicture}[baseline=.4cm]
\draw (.5,-.75)--(5,-.75)--(5,1.75)--(.5,1.75)--(.5,-.75);\draw (1,0)--(2,0)--(2,1)--(1,1)--(1,0);\draw (3.5,0)--(4.5,0)--(4.5,1)--(3.5,1)--(3.5,0);\draw (2.5,0)--(3,0)--(3,1)--(2.5,1)--(2.5,0);\draw (1.75,1) arc (140 : 40 : 1.3);\draw (1.75,0) arc (-140 : -40 : 1.3);\node at (1.5,.5) { $ g_n $ };\node at (4,.5) { $ g_n $ };\node at (2.75,.5) { $\xi^t$ } ;
\end{tikzpicture}\ ,\xi^t \rangle = \langle\ \begin{tikzpicture}[baseline=.4cm]
\draw (.5,-.75)--(4.25,-.75)--(4.25,1.75)--(.5,1.75)--(.5,-.75);\draw (1,0)--(2,0)--(2,1)--(1,1)--(1,0);\draw (2.5,0)--(3,0)--(3,1)--(2.5,1)--(2.5,0);\draw (1.75,1) arc (140 : 40 : 1.3);\draw (1.75,0) arc (-140 : -40 : 1.3);\draw (3.75,1)--(3.75,0);\node at (1.5,.5) { $ g_n $ };\node at (2.75,.5) { $\xi^t$ } ;
\end{tikzpicture}\ ,\xi^t \rangle}
\begin{document}

\title{Hilbert modules over a planar algebra and the Haagerup property}
\maketitle
\begin{center}
{\sc by Arnaud Brothier\footnote{Vanderbilt University, Department of Mathematics, 1326 Stevenson Center Nashville, TN, 37212, USA,\\ arnaud.brothier@vanderbilt.edu} and Vaughan Jones \footnote{Vanderbilt University, Department of Mathematics, 1326 Stevenson Center Nashville, TN, 37212, USA,\\ vaughan.f.jones@vanderbilt.edu}
}
\end{center}

\begin{abstract}\noindent
Given a \suplal\ $\Pl$ and a \HPmo\ of \lowe\ 0 we build a bimodule over the \syenin\ associated to $\Pl$.
As an application we  prove diagrammatically that the \TLJ\ \stin s have the \Hapr. 
This provides a new proof of a result due to Popa and Vaes.
\end{abstract}
\section{Introduction and main results}

Popa initiated the study of approximation properties of subfactors in \cite{Popa_correspondances,Popa_classification_subfactors_amenable,Popa_94_Sym_env_alg}.
To any finite index \sufa  of \tII\ one can associate a combinatorial object called the \stin.
This invariant has been axiomatized as a paragroup, a $\lambda$-lattice, and a \plal\ respectively by Ocneanu, Popa, and the second author \cite{Ocneanu_quant_group_string_galois, popa_system_construction_subfactor, jones_planar_algebra}.
An analogue of quantum doubles for a \sufa\ was introduced by Ocneanu, Longo and Rehren, and Popa( \cite{Ocneanu_quant_group_string_galois, Longo_Rehren_95_Nets_sf, Popa_94_Sym_env_alg}).
The latter construction is called the \syenin.
For the construction of subfactors of Guionnet et al. in \cite{GJS_random_matrices_free_proba_planar_algebra_and_subfactor}, Curran et al. in \cite{Curran_Jones_Shlyakhtenko_14_sym_env_alg}
gave a diagrammatic description of the  \syenin.

Recently, Popa and Vaes introduced a representation theory for \sufa s and \stin s \cite{Popa_Vaes_subfactors}.
They defined the \Hapr\ for a \sufa\ and showed that it depends only on its \stin.
They then showed that the \TLJ\ \stin s have the \Hapr.
(Note, this result was already announced in \cite[Remark 3.5.5]{Popa_betti_numbers_invariants}.)
Their proof uses previous work on discrete quantum groups and the equivalence between the bimodule category associated to the \TLJ\ \stin\ and the representation category of the quantum group PSU$_q(2)$ \cite{DeCommer_Freslon_Yamashita_CCAP}.
Here we give another proof:
\begin{theo}\label{theo:TLJ_Haagerup}
The \TLJ\ \stin\ has the \Hapr\ for any \lopa\ $\delta\in\{2\cos \frac{\pi}{n},\ n\geqslant 3\}\cup [2:\infty)$.
\end{theo}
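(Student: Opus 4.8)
The plan is to deduce Theorem~\ref{theo:TLJ_Haagerup} from the bimodule construction developed above, combined with the characterisation of the \Hapr\ of a \stin\ due to Popa and Vaes \cite{Popa_Vaes_subfactors}. By that characterisation, a \stin\ has the \Hapr\ precisely when its \syenin\ $\TS$ has the \reHapr; that is, there is a net of normal $T$-bimodular trace-preserving \ucp\ maps $\Phi_i\colon S\to S$ with $\Vert\Phi_i(x)-x\VT\to 0$ for every $x\in S$, such that the $S$-$S$ bimodule $\mH_{\Phi_i}$ associated to each $\Phi_i$ is weakly contained in $L^2\SeT\cong L^2(S)\otimes_T L^2(S)$ (``$T$-compactness''). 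So it suffices to build such a net in the \TLJ\ case.

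The link is the construction of this paper: each \HTLJmo\ $H$ of \lowe\ $0$ yields an $S$-$S$ bimodule and hence, via the positive form on $S$ determined by a \lowe\ vector $\Omega\in H$, a \map\ $\Phi_H\colon S\to S$. On the decomposition $L^2(S)=\bigoplus_k H_k\otimes\overline{H_k}$ over the simple \TLJ\ bimodules, $\Phi_H$ acts as the ``Fourier multiplier'' rescaling the $k$-th block by a scalar $\psi_H(k)$, where $\psi_H$ is the positive-definite function on the \TLJ\ fusion ring, normalised by $\psi_H(0)=1$, determined by $\Omega$. Three facts make this effective: any such multiplier is automatically normal, $T$-bimodular and trace-preserving; it is \ucp\ exactly when the inner product of $H$ is positive (equivalently $\psi_H$ is positive definite); and $\mH_{\Phi_H}$ is weakly contained in $L^2\SeT$ exactly when $\psi_H$ lies in $c_0$ of the fusion ring. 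Conversely, every positive-definite $\psi$ with $\psi(0)=1$ is, through the \GNS\ construction, the coefficient of some \HTLJmo\ of \lowe\ $0$. Thus it is enough to produce a net $(\psi_t)$ of positive-definite $c_0$ functions on the \TLJ\ fusion ring with $\psi_t(0)=1$ that converges pointwise to $1$.

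Let $\ell$ denote the canonical length on the \TLJ\ fusion ring (distance to the root on the principal graph) and set $\psi_t(k)=t^{\,\ell(k)}$ for $t\in(0,1)$. Then $\psi_t\in c_0$ — automatically when the principal graph is $A_{n-1}$ (the case $\delta=2\cos\frac{\pi}{n}$) and because $0<t<1$ when it is $A_\infty$ (the case $\delta\geqslant 2$) — and $\psi_t\to 1$ pointwise as $t\to 1^-$. The one genuine point is positive definiteness of $k\mapsto t^{\,\ell(k)}$: a direct computation shows that its Gram matrix on the fusion ring is, up to the positive scalar $(1-t^2)^{-1}$, the kernel $t^{|i-j|}-t^{\,i+j+2}$ (with a truncated variant in the finite-depth case), which is positive semidefinite — for instance, it is the Green kernel of a sub-Markovian nearest-neighbour walk on $\N$, or, directly, a Gram matrix in the reproducing-kernel space of $t^{|i-j|}$ on $\Z$ obtained by the reflection principle. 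Feeding $\psi_t$ through the \GNS\ construction and the bimodule machinery produces the net $\Phi_t:=\Phi_{H_t}$ of $T$-compact maps demanded by the Popa--Vaes criterion, which gives Theorem~\ref{theo:TLJ_Haagerup}.

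The main obstacle is the core of the previous paragraph: constructing the \lowe\ $0$ modules $H_t$ diagrammatically and verifying that their inner products are positive for every admissible \lopa\ $\delta$ — equivalently, proving diagrammatically that the ``$t^{\,\mathrm{length}}$'' coefficient is positive definite on the \TLJ\ fusion data in both the infinite- and finite-depth regimes — together with the identification of the $c_0$-decay of $\psi_{H_t}$ with weak containment of $\mH_{\Phi_t}$ in $L^2\SeT$. Granting the bimodule construction and the Popa--Vaes reduction, everything else — normality, $T$-bimodularity, trace preservation, and the convergence $\Phi_t\to\mathrm{Id}_S$ — is formal.
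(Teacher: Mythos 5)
Your overall architecture is the paper's: reduce to the \reHapr\ of the \syenin\ $\TS$, use \lowe\ $0$ Hilbert $TLJ$-modules to manufacture normal $T$-bimodular trace-preserving \ucp\ maps acting as scalars on the blocks $L_n\cong X_n\ovotimes\overline{X_n}^\op$ of $L^2(S)$, and conclude from eigenvalue functions that are $c_0$ in $n$ and tend pointwise to $1$. The difference is which multiplier you use and, crucially, how positivity is established, and that is where the argument breaks.

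The positivity criterion you verify is not the relevant one. For a function $\psi$ on the TLJ fusion data to come from a \HPmo\ of \lowe\ $0$ (equivalently, to be a cp-multiplier in the Popa--Vaes sense), the Gram matrix one must control is weighted by the quantum dimensions: $G_{ij}=\sum_k N_{ij}^k\,d_k\,\psi(k)=\sum_{k=|i-j|}^{i+j}[k+1]_q\,t^{k}$, with $d_k=[k+1]_q$. Dropping the weights to land on the kernel $(1-t^2)^{-1}\bigl(t^{|i-j|}-t^{i+j+2}\bigr)$ computes positivity in the $\delta$-independent ``counting'' fusion algebra; your reflection-principle argument correctly shows \emph{that} kernel is positive semidefinite, but it is not the obstruction. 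Moreover, even the corrected dimension-weighted positivity is only a necessary condition: to feed $\psi_t$ into the Section \ref{sec:modules} machinery you need an honest Hilbert annular module, i.e.\ positivity of the full inner product $\langle\alpha(\xi),\beta(\xi)\rangle$ on the whole annular category, which a bare \GNS\ construction from a function on the fusion ring does not supply. This is exactly the input the paper imports from the Jones/Graham--Lehrer classification: the modules $V(t)$, $0<t\leqslant\delta$, are already known to be Hilbert modules, and the resulting eigenvalues are computed to be $[2n+1]_\omega/[2n+1]_q$ with $\omega+\omega^{-1}=t$ --- not $t^{2n}$, so your family is genuinely different. To salvage your route you would have to exhibit $k\mapsto t^{\ell(k)}$ as a positive integral of these normalized characters over the admissible parameter set (a moment problem, solvable but not addressed by your kernel computation), or redo the annular positivity from scratch; as written, the step you yourself identify as ``the one genuine point'' is carried out for the wrong object.
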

Our proof only uses  \plal\ technology. 
The idea is that the lowest weight zero annular representations of the planar algebra immediately give "compact" bimodules (which are obvious in the Curran et al. pictures), which tend to the trivial bimodule in the way required by the Popa-Vaes definiton of the Haagerup property.

\subsection*{Acknowledgement}
The first author thanks Dietmar Bisch for many encouragements.
He also thanks Jesse Peterson and Jean-Louis Lhuillier for their patience and guidance.
\section{Preliminaries}\label{sec:preliminaries}

\subsection{The \syenin\ associated to a \suplal}

We refer to \cite{jones_planar_algebra} for more details about \plal s.
We recall the construction of \cite[Section 2]{Curran_Jones_Shlyakhtenko_14_sym_env_alg}.
Note, we define the \syenin\ via the product introduced in \cite[Section 2.1]{Curran_Jones_Shlyakhtenko_14_sym_env_alg} that we call the Bacher product.
Let $\Pl=(\Pl^\pm_n,\ n\gO)$ be a \suplal.
For any $k,n,m\gO,$ let $D_k(n,m)$ be a copy of the vector space $\Pl^+_{n+m+2k}$.
We decorate strings with natural numbers to indicate that they represent a given number of parallel strings.
The distinguished interval of a box is decorated by a dollar sign if it is not at the top left corner.
We will omit unnecessary decorations.
Consider the direct sum
$$Gr_k\Pl\boxtimes Gr_k\Pl:=\bigoplus_{n,m\gO}D_k(n,m)$$
that we equipped with the Bacher product:
$$\xstarky$$
where $x\in D_k(n,m)$ and $y\in D_k(i,j).$
Let $\dag:Gr_k\Pl\boxtimes Gr_k\Pl\loriar Gr_k\Pl\boxtimes Gr_k\Pl$ be the anti-linear involution that sends $D_k(n,m)$ to itself and satisfies
$$x^\dag=\diagxdag\ , \text{ for any } x\in D_k(n,m).$$
 Consider the linear form $\tau:Gr_k\Pl\boxtimes Gr_k\Pl\loriar\C$, which is zero unless $n=m=0$ and sends the unit of $D_k(0,0)$ to $1$.
We have that $(Gr_k\Pl\boxtimes Gr_k\Pl,\star_k,\dag,\tau)$ is an associative $*$-algebra with a faithful tracial state \cite[Corollary 2.3]{Curran_Jones_Shlyakhtenko_14_sym_env_alg}.
Further, $Gr_k\Pl\boxtimes Gr_k\Pl$ acts by bounded operators on the \GNS\ \Hisp\ for $\tau$ \cite[Theorem 2.1]{Curran_Jones_Shlyakhtenko_14_sym_env_alg}.
Let $M_k\boxtimes M_k$ be its \GNS\ completion which is a factor of \tII.
Let $M_k$ be the von Neumann subalgebra of $M_k\boxtimes M_k$ generated by elements of the form
$$\begin{tikzpicture}[baseline = .4cm]
\draw (0,-.5)--(0,1.5)--(3,1.5)--(3,-.5)--(0,-.5);
\draw (1,.25)--(1,1)--(2,1)--(2,.25)--(1,.25);
\draw (0,0)--(3,0);
\draw (0,.5)--(1,.5);
\draw (2,.5)--(3,.5);
\draw (1.5,1.5)--(1.5,1);
\node at (1.5,-.15) {{  \scriptsize{$k$}  }};
\node at (1.5,.5) { $x$ };
\end{tikzpicture}\in D_k(n,0),\ n\gO.$$
Observe, the von Neumann subalgebra of $M_k\boxtimes M_k$ generated by the family of sets $D_k(0,m),\ m\gO$
is isomorphic to $M_k^\op$ and commutes with $M_k$. 
We identify $M_k^\op$ with this von Neumann subalgebra.
Note, we have a unital inclusion of $M_{k-1}$ in $M_k$ by adding two horizontal strings under elements of $M_{k-1}$.
By \cite{GJS_random_matrices_free_proba_planar_algebra_and_subfactor}, $M_{k-1}\subset M_k$ is a \sufa\ of \tII\ with \stin\ isomorphic to the \suplal\ $\Pl$ or its opposite depending on the parity of $k$.
The von Neumann subalgebra of $M_k\boxtimes M_k$ generated by $M_k$ and $M_k^\op$ is isomorphic to $M_k\ootimes M_k^\op$ and the inclusion $$M_k\vee M_k^\op\subset M_k\boxtimes M_k$$
is isomorphic to Popa's \syenin\ associated to the \sufa\ $M_{k-1}\subset M_k$ for any $k\geqslant 1$ \cite{Curran_Jones_Shlyakhtenko_14_sym_env_alg}.
We denote by $M\ootimes M^\op\subset M\boxtimes M$ the inclusion $M_0\vee M_0^\op\subset M_0\boxtimes M_0$ and call it the \syenin\ associated to $\Pl$.
Similarly, we write $D_0(n,m)=D(n,m)$ for any $n,m\gO$.

\subsection{Hilbert modules over a \suplal}

We introduce notations and terminology regarding Hilbert modules over a \suplal.
We refer to \cite{Jones_annular_struct} for more details.
Let us fix a \suplal\ $\Pl$.
An annular tangle $\alpha$ is a tangle in $\Pl$ with the choice of a distinguished internal disc.
We write $Ann\Pl((m,\varepsilon),(n,\epsilon))$ the complex vector space spanned by annular tangles with 2n (resp. 2m) boundary points on its internal (resp. external) disc and where the dollar sign is in a region with shading $\epsilon$ (resp. $\varepsilon$).
A tangle in $Ann\Pl((m,\varepsilon),(n,\epsilon))$ is called a $((m,\varepsilon),(n,\epsilon))$-annular tangle.
Let $A\Pl=(A\Pl((m,\varepsilon),(n,\epsilon)),\ n,m\gO,\ \epsilon,\varepsilon\in\pm)$ be the annular algebroid associated to $\Pl$.
We denote by $\alpha\longmapsto \alpha^\dag$ the anti-linear involution which sends a $((m,\varepsilon),(n,\epsilon))$-annular tangle to a $((n,\epsilon),(m,\varepsilon))$-annular tangle by reflection in a circle half way between the inner and outer boundaries.
A \HPmo\ is a graded vector space $V=(V_n^\pm,\ n\gO)$, where each $V_n^\pm$ is a finite dimensional \Hisp, $A\Pl$ acts on $V$, and the inner product is compatible with this action.
It means that if $\alpha\in A\Pl((m,\varepsilon),(n,\epsilon))$ then it defines a linear map from $V_n^\epsilon$ to $V_m^\varepsilon$ such that $$\langle\alpha(v),w\rangle=\langle v,\alpha^\dag(w)\rangle, \text{ for any } v\in V_n^\epsilon, w\in V_m^\varepsilon.$$
The \lowe\ of a \HPmo\ $V$ is the smallest natural number $n$ such that $V_n^+\neq \{0\}$.

\subsection{\HTLJmo s of \lowe\ 0}\label{sec:TLJ_modules}

Consider the \TLJ\ \plal\ $\Pl=TLJ$ with loop parameter $\delta\geqslant 2$.

Irreducible \HTLJmo s of \lowe\ 0 have been fully classified in \cite{Jones_annular_struct} and in \cite{Graham_Lehrer_TLJ} for the unshaded case.
For any $0<t\leqslant \delta$ there exists a \HTLJmo\ $V(t)=(V(t)^\pm_n,n\gO)$ such that $V(t)_0^+$ is one dimensional and spanned by a unit vector $\xi(t)$ which satisfies 
$$\langle \alpha(\xi(t)),\beta(\xi(t))\rangle=\delta^c t^{2d},$$ 
where $\alpha,\beta$ are annular tangles, $c$ is the number of contractible circles in the $(\pm,\pm)$-annular tangle $\beta^\dag\circ\alpha$ and $d$ is half the number of non-contractible ones.
Those \HTLJmo s will be used to construct \ucp\ maps on the \syenin\ associated to the \TLJ\ \plal.

\section{Hilbert $\Pl$-modules give $(\MMop\subset \MboxM)$-bimodules}\label{sec:modules}

Let $V=(V^\pm_n,\ n\geqslant 0)$ be a \HPmo\ of \lowe\ 0.
For $i,j\gO$, let $\mH_{i,j}$ be a copy of the \Hisp\ $V^+_{i+j}$.
Let $\mH=\bigoplus_{i,j\gO} \mH_{i,j}$ be the \Hisp\ equal to the direct sum of the $\mH_{i,j}$.
In particular, $\mH_{i+1,j-1}$ is orthogonal to $\mH_{i,j}$ in $\mH$.
Consider the dense pre-Hilbert subspace $\K\subset \mH$ spanned by the union of all $\mH_{i,j}$.
We put 
$$
\pixxi\ ,
$$
for any $x\in D(n,m)\subset \GrPboxGrP$ and $\xi\in\mH_{i,j}.$
This defines a representation 
$$\pi_0:\GrPboxGrP\loriar \mathcal L(\K),$$
where $\mathcal L(\K)$ is the algebra of endomorphism of the vector space $\K$.

\begin{prop}\label{prop:bounded_action}
For any $x\in\GrPboxGrP$, $\pi_0(x)$ defines a bounded operator on $\mH$.
Further, the representation $\pi_0$ extends to a normal \stre\ $$\pi:\MboxM\loriar \Bl(\mH).$$
\end{prop}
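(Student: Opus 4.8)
The plan is to prove three things in turn: that $\pi_0$ is a genuine $\stre$ of the $*$-algebra $\GrPboxGrP$ on the pre-Hilbert space $\K$; that every operator $\pi_0(x)$ is bounded; and that the extension to $\MboxM$ is normal. The last two I would deduce simultaneously by identifying $\mH$, as a $\GrPboxGrP$-module, with a finite direct sum of copies of the standard $\GNS$ module $L^2(\MboxM)$.

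\emph{Step 1: $\pi_0$ is a $\stre$.} One has to verify the identities $\pi_0(x\star_0 y)=\pi_0(x)\pi_0(y)$ and $\langle\pi_0(x)\xi,\eta\rangle=\langle\xi,\pi_0(x^\dag)\eta\rangle$ for $x,y\in\GrPboxGrP$ and $\xi,\eta\in\K$; both are diagrammatic. For multiplicativity one stacks the picture that defines $\pi_0(x)$ onto the one that defines $\pi_0(y)$ and reads off the Bacher product $\star_0$ of the two boxes; the two sides then agree after relabelling the summation indices, exactly as in the proof that $(\GrPboxGrP,\star_0,\dag,\tau)$ is an associative $*$-algebra. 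The $*$-identity is where the $\HPmo$ axiom is used: closing up $\pi_0(x)\xi$ against $\eta$ produces an annular tangle of the form $\beta^\dag\circ\alpha$ evaluated inside $V$, and the compatibility $\langle\alpha(v),w\rangle=\langle v,\alpha^\dag(w)\rangle$ turns this into the pairing of $\xi$ with $\pi_0(x^\dag)\eta$. This is routine bookkeeping with the shadings and the dollar sign.

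\emph{Step 2: boundedness and normality.} Write $V=V_1\oplus V_1^\perp$ where $V_1$ is the sub-$\HPmo$ generated by $V_0^+$; both summands are again $\HPmo$s and $\mH$ splits accordingly, so we may assume $V=V_1$ (this is the case, in particular, for all the modules $V(t)$ used later, where $V_0^+$ is one dimensional). Fix an $\orba$ $(\zeta_s)_s$ of $V_0^+=\mH_{0,0}$. The defining formula of $\pi_0$ shows that for $z\in D(n,m)$ the vector $\pi_0(z)\zeta_s$ lies in $\mH_{n,m}$, and that its component in $\mH_{0,0}$ equals $\tau(z)\zeta_s$: here one uses $\Pl^+_0=\C$, so that $D(0,0)$ is spanned by the empty diagram, which acts as the identity on $V_0^+$. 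Hence $\langle\pi_0(z)\zeta_s,\zeta_{s'}\rangle=\tau(z)\langle\zeta_s,\zeta_{s'}\rangle$, i.e. each $\zeta_s$ is a cyclic vector for the $*$-sub\-representation it generates, with associated state the trace $\tau$. Since $\tau$ already acts by bounded operators on its $\GNS$ space \cite[Theorem 2.1]{Curran_Jones_Shlyakhtenko_14_sym_env_alg}, the $\GNS$ uniqueness argument applied to the $*$-algebra $\GrPboxGrP$ produces a unitary from the cyclic subspace generated by $\zeta_s$ onto $L^2(\MboxM)$ intertwining $\pi_0$ with the standard representation $\lambda$; in particular $\pi_0$ is bounded and normal on that subspace. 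It remains to see that these subspaces exhaust $\K$, and this is the point I expect to be the main obstacle: one must show that $\mH_{i,j}=V^+_{i+j}$ is spanned by the vectors $\pi_0(z)\zeta_s$ with $z\in D(i,j)$. This reduces to identifying $\pi_0(D(i,j))\zeta_s$ with the set $\Pl^+_{i+j}\cdot\zeta_s$ of images of $\zeta_s$ under the annular tangles having no through-strings — a diagrammatic computation in which one must be careful about how the $2i$ ``left'' and $2j$ ``right'' strings of a box in $D(i,j)$ wrap around the annulus — together with the hypothesis that $V$ is generated by $V_0^+$.

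Granting the last point, $\mH\cong L^2(\MboxM)^{\oplus\dim V_0^+}$ as $\GrPboxGrP$-modules; consequently every $\pi_0(x)$ is bounded and $\pi_0$ extends to the normal $\stre$ $\pi$ of $\MboxM$ on $\mH$ (a direct sum of copies of the standard representation), which proves Proposition~\ref{prop:bounded_action}. If one wanted to drop the assumption that $V$ is generated by $V_0^+$, the remaining summand $V_1^\perp$ (of lowest weight $\geqslant 1$, which does not occur for the modules $V(t)$) would be handled instead by a direct estimate: split the defining sum as $\pi_0(x)=\sum_{a,b}T_{a,b}$ according to which summand $\mH_{n+i-a,m+j-b}$ each term lands in, note that these summands are pairwise orthogonal, and bound $\|T_{a,b}\|$ uniformly in the grading by counting the cups and caps glued to the box $x$ and using $\delta\geqslant 2$. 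This is not needed for Theorem~\ref{theo:TLJ_Haagerup}.
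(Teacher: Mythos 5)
Your Step~1 (that $\pi_0$ is a $\stre$ of the $*$-algebra $\GrPboxGrP$ on $\K$) is fine and is implicitly needed. The problem is the main route of Step~2: the identification $\mH\cong L^2(\MboxM)^{\oplus\dim V_0^+}$ is not merely ``the main obstacle'' you flag --- it is false, already for the modules $V(t)$ that the paper actually uses. For $\zeta\in\mH_{0,0}=V_0^+$ and $z\in D(i,j)$ the defining picture has $a=b=0$, so $\pi_0(z)\zeta$ is the image of $\zeta$ under the annular tangle in which the input disc sits in the outer region to the right of the box $z$; none of the strings of $z$ wind around the hole. Hence $\pi_0(D(i,j))\zeta$ spans only the images of $\zeta$ under such ``disc-type'' tangles, a space of dimension at most $\dim\Pl^+_{i+j}$, whereas ``$V$ generated by $V_0^+$'' gives all of $V^+_{i+j}$, which is spanned by images under annular tangles whose strings may wrap around the inner disc. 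Concretely, for $\Pl=TLJ$ with $\delta>2$ and $0<t<\delta$ one has $\dim D(1,0)=\dim TLJ^+_1=1$, while $V(t)^+_1$ is two-dimensional: the two annular $((0,+),(1,+))$-tangles whose arc passes on either side of the hole have Gram matrix $\bigl(\begin{smallmatrix}\delta & t\\ t & \delta\end{smallmatrix}\bigr)$, invertible for $t<\delta$. So $\zeta_s$ is not cyclic for the left action, the cyclic subspaces do not exhaust $\K$, and the GNS-uniqueness argument does not cover $\mH$. (A global sanity check: if $(\mH^t,\xi^t)$ were standard, the maps $\phi_t$ of Theorem~\ref{theo:TLJ_Ha} would be the identity, contradicting $c_t(n)=[2n+1]_\omega/[2n+1]_q\neq 1$.)

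Consequently the ``direct estimate'' that you relegate to the complement of the submodule generated by $V_0^+$, and declare unnecessary for Theorem~\ref{theo:TLJ_Haagerup}, is in fact what is needed on all of $\mH$; and it is exactly what the paper does, namely prove boundedness of each $\pi_0(x)$ by the diagrammatic operator-norm estimate of \cite[Theorem 3.3]{JSW_orthogonal_approach_planar_algebra}. Your observation that $\omega_{\zeta}\circ\pi_0=\tau$ is correct and coincides with the paper's route to normality (which does not rely on any cyclicity claim), so that part survives; but as written your proposal contains no valid proof of boundedness for the representations relevant to the main theorem.
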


\begin{proof}
Consider $x$ in $\GrPboxGrP$.
We can prove that $\pi_0(x)$ defines a bounded operator by following a similar argument than \cite[Theorem 3.3]{JSW_orthogonal_approach_planar_algebra}.
We continue to denote by $\pi_0(x)$ its extension to $\mH$.
Let $\xi\in\mH_{0,0}$ be a unit vector and let $\omega_\xi$ be its associated vector state.
Note, $\omega_\xi\circ\pi_0(x)=\tau(x)$ for any $x\in\GrPboxGrP$, where $\tau$ is the unique normal tracial state on $\MboxM$.
Therefore, $\pi_0$ extends to a normal $*$-representation $\pi:\MboxM\loriar B(\mH)$.
\end{proof}

Recall, if $\TS$ is an inclusion of \vna s, then a Hilbert $(\TS)$-module is a couple $(\mH,\xi)$ such that $\mH$ is a Hilbert $S$-module and $\xi$ is a $T$-central vector of $\mH$. 

\begin{coro}
Let $V$ be a \HPmo\ of \lowe\ 0. 
Consider the \Hisp\ $\mH$ constructed above and let $\xi\in \mH_{0,0}$ be a unit vector.
Then, $(\mH,\xi)$ has a structure of Hilbert $(\MMop\subset\MboxM)$-bimodule where the left action is given by $\pi$ and the right action is defined similarly.
\end{coro}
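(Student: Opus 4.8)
The plan is to take the left action from Proposition~\ref{prop:bounded_action}, build the right action by the mirror construction, check that the two actions commute, and verify that $\xi$ is $\MMop$-central.

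For the left action I would simply take $\pi$: Proposition~\ref{prop:bounded_action} furnishes a normal $*$-representation $\pi:\MboxM\loriar\Bl(\mH)$, and the compatibility with the inner product of $\mH$ demanded of a left Hilbert-module structure is nothing but the assertion that $\pi$ is a $*$-representation. For the right action I would first define, on the dense subspace $\K$,
$$\etapointx$$
for $x\in D(n,m)\subset\GrPboxGrP$ and $\eta\in\mH_{i,j}$ --- this is the diagram defining $\pi_0$ with the two inner discs interchanged, so that $x$ is glued to $\eta$ on the flank opposite to the one used by $\pi_0$. Then I would rerun the proof of Proposition~\ref{prop:bounded_action}: boundedness follows from the method of \cite[Theorem 3.3]{JSW_orthogonal_approach_planar_algebra}, the anti-linear involution $\dag$ supplies the $*$-compatibility, and normality follows because the vector state of a unit vector of $\mH_{0,0}$ again pulls back along this reflected action to the trace $\tau$ on $\MboxM$. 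This yields a normal anti-$*$-representation of $\MboxM$ on $\mH$, equivalently a normal $*$-representation of $(\MboxM)^\op$; write $\eta\cdot t$ for the associated right action.

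Next I would check that the left and right actions commute. On $\K$ this is immediate from the diagrams: $\pi_0(x)$ attaches $x$ together with its arcs to one flank of the disc holding the argument, while the right action of $y$ attaches $y$ to the other flank, so the two operations take place in disjoint regions of the annulus and may be carried out in either order; by density and boundedness this persists on $\mH$, so $\mH$ is a Hilbert $\MboxM$-module. It remains to verify that a unit vector $\xi\in\mH_{0,0}$ --- which exists precisely because $V$ has lowest weight $0$ --- is $\MMop$-central, i.e.\ $\pi(t)\xi=\xi\cdot t$ for all $t\in\MMop$. Since $\MMop$ is generated by the copy of $M$ realized by the boxes $D(n,0)$ and the copy of $M^\op$ realized by the boxes $D(0,m)$, it suffices to establish $\pi_0(x)\xi=\xi\cdot x$ for $x$ one such box and $\xi\in\mH_{0,0}$. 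Here the summation indices in both $\pi_0(x)\xi$ and $\xi\cdot x$ are forced to vanish, and the two resulting annular tangles differ only by sliding the empty hole carrying $\xi$ from one side of the box $x$ to the other, which no string obstructs; hence they coincide. With the identity on generators in hand, the homomorphism property of $\pi_0$, the commutation of the two actions, and the right-module property give
$$\pi_0(xy)\xi=\pi_0(x)\pi_0(y)\xi=\pi_0(x)(\xi\cdot y)=(\pi_0(x)\xi)\cdot y=(\xi\cdot x)\cdot y=\xi\cdot(xy),$$
so by linearity $\pi(t)\xi=\xi\cdot t$ on the $*$-algebra generated by these boxes, which is $\sigma$-weakly dense in $\MMop$. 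Normality of both actions together with the Kaplansky density theorem then upgrades this to $\pi(t)\xi=\xi\cdot t$ for every $t\in\MMop$, which is exactly the $T$-centrality of $\xi$.

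Most of the work is formal bookkeeping once Proposition~\ref{prop:bounded_action} is in hand. The two steps needing genuine attention are the mirror of that proposition --- establishing boundedness and normality of the reflected right action, for which I would reuse the same input \cite{JSW_orthogonal_approach_planar_algebra} --- and the diagrammatic identity $\pi_0(x)\xi=\xi\cdot x$ for $x\in D(n,0)\cup D(0,m)$ acting on $\mH_{0,0}$, where one must keep track of the fact that $D(n,0)$ and $D(0,m)$ sit at opposite ends of the annular picture. I expect the latter to be the main subtlety, with the rest going through routinely.
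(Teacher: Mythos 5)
Your proposal is correct and follows essentially the same route as the paper: left action from Proposition~\ref{prop:bounded_action}, a mirrored right action that commutes because the two boxes are glued in disjoint regions of the annulus, and $\MMop$-centrality of $\xi$ verified by an isotopy of diagrams on generators followed by density. The only cosmetic difference is that the paper checks centrality directly on elements $x\otimes y^{\op}\in \GrP\otimes\GrP^{\op}$ rather than on single boxes from $D(n,0)$ and $D(0,m)$ and then multiplying, but the underlying isotopy argument is identical.
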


\begin{proof}
Proposition \ref{prop:bounded_action} implies that $\mH$ is a $\MboxM$-bimodule with the action described above.
Consider $x\otimes y^\op\in \GrP\otimes\GrP^\op$, where $GrP\otimes GrP^\op=\MMop\cap \GrPboxGrP$.
Since $\xi\in\mH_{0,0}$, we have
$$(x\otimes y^\op) \cdot \xi= \xyxi \text{ and } \xi \cdot (x\otimes y^\op)= \xixy \ .$$
Those two pictures are isotopic to each other.
Therefore, $(x\otimes y^\op)\cdot \xi=\xi\cdot (x\otimes y^\op).$
By density of $GrP\otimes GrP^\op$ inside $\MMop$, we obtain that $\xi$ is a $\MMop$-central vector.
\end{proof}

\section{The \TLJ\ \stin\ has the \Hapr}\label{sec:Haagerup}

In this article, any inclusion of tracial \vna s will be supposed to be unital and tracial.
We recall the definition of the \reHapr\ due to Boca \cite{Boca_irred_Haagerup}. 
Note, Popa defined a very similar property \cite{Popa_betti_numbers_invariants}.
Those two definitions coincide in the context of Definition \ref{defi:Haagerup}.

\begin{defi}\label{defi:CPAI}
Consider an inclusion of tracial \vna s $\mathcal N\subset (\mathcal M,\tau).$ 
A completely positive approximation of the identity (CPAI) for $\mathcal N\subset (\mathcal M,\tau)$ is a sequence of normal $\mathcal N$-bimodular trace-preserving \ucp\ maps $(\varphi_l:\mathcal M\longrightarrow \mathcal M,\ l\geqslant 0)$ such that $\Vert \varphi_l(x)-x\Vert_2\longrightarrow_l 0,$ for any $x\in\mathcal M$, and the unique continuous extension $\Theta_l\in B(L^2(\mathcal M,\tau))$ of $\varphi_l$ to $L^2(\mathcal M,\tau)$ is in the compact ideal space of $\langle \mathcal M, e_\mathcal N\rangle$.

If such a sequence exists we say that $\mathcal N\subset (\mathcal M, \tau)$ has the \reHapr.
\end{defi}

\begin{defi}\label{defi:Haagerup}\cite{Popa_Vaes_subfactors}
A \sufa\ $\NM$ has the \Hapr\ if its \syenin\ has the \reHapr.
A \stin\ $\G$ has the \Hapr\ if there exists a \sufa\ $\NM$ with \stin\ isomorphic to $\G$ which has the \Hapr.
\end{defi}
Recall that if two \sufa s have isomorphic \stin s, then one of them has the \Hapr\ if and only if the other one has the \Hapr, see \cite[Remark 3.5.5]{Popa_betti_numbers_invariants} or \cite{Popa_Vaes_subfactors}. 

\begin{lemma}
Let $\Pl$ be a \suplal.
Then $\Pl$ has the \Hapr\ if and only if its associated \syenin\ $\MMop\subset\MboxM$ has the \reHapr.
\end{lemma}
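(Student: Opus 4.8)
The statement to prove is: a subfactor planar algebra $\Pl$ has the Haagerup property (in the sense of Definition \ref{defi:Haagerup}, i.e.\ some subfactor with standard invariant $\Pl$ has it) if and only if the symmetric enveloping inclusion $\MMop\subset\MboxM$ associated to $\Pl$ has the relative Haagerup property. The plan is to use the concrete model of the symmetric enveloping inclusion recalled in Section \ref{sec:preliminaries}: there we saw that for every $k\geqslant 1$ the inclusion $M_k\vee M_k^\op\subset M_k\boxtimes M_k$ is isomorphic to Popa's symmetric enveloping inclusion of the subfactor $M_{k-1}\subset M_k$, whose standard invariant is $\Pl$ (for even $k$) or its opposite (for odd $k$). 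Taking $k=1$ gives an honest subfactor $M_0\subset M_1$ whose standard invariant is (isomorphic to) $\Pl$, and whose symmetric enveloping inclusion is isomorphic to $M_1\vee M_1^\op\subset M_1\boxtimes M_1$.

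The forward direction is then almost immediate: if $\Pl$ has the Haagerup property, then by Definition \ref{defi:Haagerup} \emph{some} subfactor $\NM$ with standard invariant isomorphic to $\Pl$ has a symmetric enveloping inclusion with the relative Haagerup property; by the invariance statement recalled right after Definition \ref{defi:Haagerup} (from \cite{Popa_betti_numbers_invariants,Popa_Vaes_subfactors}), the symmetric enveloping inclusion of $\NM$ is, up to the appropriate isomorphism of inclusions, the same whether computed from $\NM$ or from the model subfactor $M_0\subset M_1$ — more precisely, the relative Haagerup property of the symmetric enveloping inclusion depends only on the standard invariant. Hence $M_1\vee M_1^\op\subset M_1\boxtimes M_1$, and therefore the isomorphic inclusion $\MMop\subset\MboxM$, has the relative Haagerup property. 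Conversely, if $\MMop\subset\MboxM$ has the relative Haagerup property, then so does the isomorphic inclusion $M_1\vee M_1^\op\subset M_1\boxtimes M_1$, which is the symmetric enveloping inclusion of the subfactor $M_0\subset M_1$; since $M_0\subset M_1$ has standard invariant isomorphic to $\Pl$, Definition \ref{defi:Haagerup} says exactly that $\Pl$ has the Haagerup property.

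The one point that needs a little care — and which I expect to be the main (mild) obstacle — is the interplay between passing to the opposite planar algebra and the parity of $k$. Working with $k=1$, the standard invariant of $M_0\subset M_1$ is $\Pl$ itself (for the even-shift one could equally use $M_1\subset M_2$), but in any case an inclusion has the relative Haagerup property if and only if its "opposite'' does, and a planar algebra has the Haagerup property if and only if its opposite does (this is symmetric in the definitions: the CPAI maps can be transported by the canonical anti-isomorphisms). So whichever parity one lands on, the equivalence is unaffected; I would state this as a one-line remark rather than belabor it. The only genuinely external input is the fact — already recorded in the excerpt — that the diagrammatic inclusion $\MMop\subset\MboxM$ built from $\Pl$ really is isomorphic, as an inclusion of $\mathrm{II}_1$ von Neumann algebras, to Popa's symmetric enveloping inclusion of $M_{k-1}\subset M_k$ (\cite{Curran_Jones_Shlyakhtenko_14_sym_env_alg, GJS_random_matrices_free_proba_planar_algebra_and_subfactor}), together with the standard invariance of the relative Haagerup property under isomorphism of inclusions and under change of subfactor within a fixed standard invariant (\cite{Popa_betti_numbers_invariants, Popa_Vaes_subfactors}). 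With those in hand the proof is just a matter of chaining the identifications.
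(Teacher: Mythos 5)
Your overall strategy --- pass to the subfactor $M_0\subset M_1$, whose standard invariant is (isomorphic to) $\Pl$ and whose Popa \syenin\ is $M_1\vee M_1^\op\subset M_1\boxtimes M_1$, and then transfer the \reHapr\ between that inclusion and $\MMop\subset\MboxM$ --- is the right one, and the first half agrees with the paper. The gap is in the transfer step: you assert that $\MMop\subset\MboxM$ is \emph{isomorphic} to $M_1\vee M_1^\op\subset M_1\boxtimes M_1$, citing the identification recalled in Section \ref{sec:preliminaries}. But that identification is only stated there for the inclusions $M_k\vee M_k^\op\subset M_k\boxtimes M_k$ with $k\geqslant 1$, whereas $\MMop\subset\MboxM$ is by definition the $k=0$ inclusion $M_0\vee M_0^\op\subset M_0\boxtimes M_0$, which is \emph{not} Popa's \syenin\ of any subfactor in the tower (there is no $M_{-1}$), and there is no stated reason for it to be isomorphic, as an inclusion, to the $k=1$ one. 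Relating these two inclusions is precisely the content of the lemma, and it is the step your proposal leaves unproved; your remarks about parity and opposite planar algebras are fine but peripheral to this point.

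The paper closes this gap by a compression argument: it introduces the intermediate inclusion $M_0\vee M_0^\op\subset M_1\boxtimes M_1$ and the Jones projection $e$ given by a normalized cup--cap, observes that the compression $e(M_0\vee M_0^\op)e\subset e(M_1\boxtimes M_1)e$ is isomorphic to $\MMop\subset\MboxM$, and then invokes \cite[Propositions 2.3 and 2.4]{Popa_betti_numbers_invariants}, which give stability of the \reHapr\ under such compressions and under finite-index change of the smaller algebra (passing from $M_0\vee M_0^\op$ to $M_1\vee M_1^\op$ inside $M_1\boxtimes M_1$). To repair your argument you would need either to supply this compression step (or some substitute for it), or to prove directly that the $k=0$ and $k=1$ inclusions are isomorphic --- which is not available from the material you cite.
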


\begin{proof}
Consider the \sufa\ $M_0\subset M_1$ defined in Section \ref{sec:preliminaries}.
Its \plal\ is equal to $\Pl$.
Popa's \syenin\ associated to $M_0\subset M_1$ is isomorphic to $M_1\vee M_1^\op \subset M_1\boxtimes M_1$.
Consider the inclusion $M_0\vee M_0^\op\subset M_1\boxtimes M_1$.
Let $e$ be the Jones projection
$\diage\ .$
Note, the compression $e (M_0\vee M_0^\op) e \subset e (M_1\boxtimes M_1) e$ is isomorphic to $\MMop\subset \MboxM$.
Therefore, by \cite[Proposition 2.3 and Proposition 2.4]{Popa_betti_numbers_invariants}, $\MMop\subset \MboxM$ has the \reHapr\ if and only if $M_1\vee M_1^\op \subset M_1\boxtimes M_1$ has the \reHapr.
\end{proof}

\begin{lemma}
Let $TLJ$ be the \TLJ\ \plal\ with a loop parameter $\delta\geqslant 2$ and let $\MMop\subset \MboxM$ be its associated \syenin.
Consider the 2nth-Jones-Wenzl idempotent $g_n\in \text{TLJ}^+_{2n}$ that we identity with its associated element in
$D(n,n)\subset \MboxM.$
Let $L_n\subset L^2(\MboxM)$ be the $\MMop$-bimodule generated by $g_n$.
Then $L_n$ is isomorphic to $X_n\ootimes \overline{X_n}^\op$, where $X_n$ is the irreducible $M_0$-bimodule corresponding the the 2nth vertex in the principal graph of the \sufa\ $M_0\subset M_1$.
Further, $L^2(\MboxM)$ is equal to the direct sum of the bimodule $L_n$.
\end{lemma}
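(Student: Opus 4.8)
The plan is to construct, for each $n\gO$, an explicit unitary $\Phi_n$ from $X_n\ootimes\overline{X_n}^\op$ onto $L_n$ intertwining the $\MMop$-bimodule structures, and then to show that the subspaces $L_n$, $n\gO$, are pairwise orthogonal and together span $L^2(\MboxM)$. Throughout I work inside the diagrammatic model of $\MboxM$ recalled in Section~\ref{sec:preliminaries}: $L^2(\MboxM)$ is the $\tau$-completion of $\bigoplus_{a,b\gO}D(a,b)$ with $D(a,b)=TLJ^+_{a+b}$, the two commuting copies of $M_0$ act by adjoining boxes on the two sides, and $g_n=p_{2n}\in TLJ^+_{2n}=D(n,n)$ is the $2n$-th Jones--Wenzl idempotent, so that $g_n^2=g_n=g_n^\dag$, every cap and cup on the $2n$ legs of $g_n$ annihilates it, and its closed-up value $\beta_n$ (the $2n$-th quantum dimension) is positive. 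I also use the standard diagrammatic realization of the $M_0$-$M_0$ bimodule $X_n$ from \cite{GJS_random_matrices_free_proba_planar_algebra_and_subfactor}, in which vectors are limits of diagrams carrying a distinguished copy of $p_{2n}$, the inner product is computed by closing up through this $p_{2n}$, and there is a cyclic vector $\xi_n$ obtained from $p_{2n}$ itself.

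First I would define $\Phi_n$ on algebraic tensors: $\Phi_n(\xi\otimes\overline\eta)$ is the element of $D(a,b)\subset\MboxM$ obtained by stacking the box $\xi$ above $\eta^\dag$ and gluing their distinguished $2n$-bundles through the single idempotent $g_n$, rescaled by an explicit constant depending only on $n$. Because $g_n$ absorbs the caps and cups implicit in the balanced presentation of $X_n$, this recipe is independent of the choices made, so $\Phi_n$ is well defined; and it is manifestly bimodular, the left $M_0$-action adjoining boxes above $\xi$ and the right $M_0^\op$-action adjoining boxes below $\eta^\dag$. The content is the isometry: evaluating $\langle\Phi_n(\xi\otimes\overline\eta),\Phi_n(\xi'\otimes\overline{\eta'})\rangle$ in $L^2(\MboxM)$ amounts to closing up the doubled diagram, in which the two copies of $g_n$ come to sit in series and collapse via $g_n^2=g_n$, and the cap-absorbing property then factors the resulting closed diagram as the product $\langle\xi,\xi'\rangle_{X_n}\,\langle\eta',\eta\rangle_{X_n}$, which, once the normalizing constant is chosen so as to cancel the leftover factor $\beta_n$, equals $\langle\xi\otimes\overline\eta,\xi'\otimes\overline{\eta'}\rangle$. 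Finally $\Phi_n(\xi_n\otimes\overline{\xi_n})$ is a nonzero multiple of $g_n$; since $\xi_n$ is cyclic in $X_n$ and $\Phi_n$ is bimodular and isometric, its range is exactly the $\MMop$-bimodule generated by $g_n$, namely $L_n$. This yields the bimodule unitary $\Phi_n\colon X_n\ootimes\overline{X_n}^\op\to L_n$.

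For the decomposition I would argue as follows. If $n\neq m$ then $L_n\perp L_m$: vectors of $L_n$ (resp. $L_m$) are limits of diagrams factoring through $g_n$ (resp. $g_m$), and the $\tau$-inner product of two such diagrams closes up to an expression containing $g_n$ and $g_m$ in series, joined only by cups and caps, which vanishes for $n\neq m$ because a product $p_{2n}\,T\,p_{2m}$ with $T$ a Temperley--Lieb diagram is zero unless both idempotents see $2n=2m$ through-strands. For spanning, each $D(a,b)=TLJ^+_{a+b}$ decomposes according to its number $2k$ of through-strands, $0\leqslant k\leqslant\min(a,b)$; inserting the Jones--Wenzl resolution of the identity on the through-strands rewrites the through-$2k$ part as the span of diagrams of the form (half-diagram from $2a$ to $2k$)$\cdot g_k\cdot$(half-diagram from $2k$ to $2b$), which is precisely the range of $\Phi_k$ and hence lies in $L_k$. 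Thus the dense subspace $\bigoplus_{a,b}D(a,b)$ is contained in $\sum_k L_k$, and with the orthogonality this gives $L^2(\MboxM)=\bigoplus_{n\gO}L_n\cong\bigoplus_{n\gO}X_n\ootimes\overline{X_n}^\op$. (Alternatively, the decomposition $L^2(\MboxM)=\bigoplus_n L_n$ with the $L_n$ pairwise non-isomorphic and multiplicity-free is Popa's description of the bimodules of a symmetric enveloping inclusion \cite{Popa_94_Sym_env_alg}, the irreducible $M_0$-$M_0$ bimodules of the standard invariant of $TLJ$ being exactly the $X_n$; the diagrammatic argument above recovers this in the present case.)

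The step I expect to be the main obstacle is the isometry of $\Phi_n$: one must pin down the diagrammatic description of $X_n$ and of its cyclic vector $\xi_n$ precisely enough that the $X_n$-inner product is an explicit closed-diagram expression, and then carry out the bookkeeping of contractible circles and of the normalizing constant so that the doubled diagram collapses exactly to the product of the two $X_n$-inner products with no spurious power of $\delta$. Once this is in hand, the orthogonality of distinct $L_n$ and the through-strand decomposition of the $D(a,b)$ are routine consequences of the defining properties of the Jones--Wenzl idempotents and the semisimplicity of the Temperley--Lieb algebras for $\delta\geqslant 2$.
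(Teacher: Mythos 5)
Your proposal is correct and follows essentially the same route as the paper's proof: orthogonality of the $L_n$ from the cap-killing property of the Jones--Wenzl idempotents, spanning via the through-strand/Jones--Wenzl decomposition of the $D(a,b)$ (which the paper phrases as the statement that $\GrPboxGrP$ is generated by the Jones--Wenzl idempotents together with $GrP\otimes GrP^{\op}$), and the identification of $L_n$ with $X_n\ootimes\overline{X_n}^{\op}$ by exactly the tangle you describe, gluing the two half-diagrams through a single copy of $g_n$. You spell out the isometry and normalization in more detail than the paper, which simply asserts that this tangle gives the isomorphism and refers to the computation in Curran--Jones--Shlyakhtenko.
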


\begin{proof}
We follow an argument in \cite[pp. 120-122]{Curran_Jones_Shlyakhtenko_14_sym_env_alg}.
Let us show that $L_n$ is orthogonal to $L_m$ if $n\neq m$.
This is equivalent to show that for any $x,y\in TLJ$, we have $xg_ny\perp g_m$ in the planar algebra $TLJ$.
But this is obvious.
Observe, the $*$-algebra $\GrPboxGrP$ is generated by the set of Jones-Wenzl idempotents and $GrP\otimes GrP^\op$.
Therefore, $L^2(\MboxM)$ is equal to the direct sum of the bimodules $L_n$.
Consider the $M$-bimodule $X_n\subset L^2(M_n)$ equal to the image of $g_n$ viewed as an element of $TLJ^+_{2n}=M'\cap M_{2n}\subset B(L^2(M_n))$.
We have an isomorphism from $X_n\ootimes \overline{X_n}^\op$ onto $L_n$ given by the tangle which connects the $2n$ side strings of an elements of $X_n$ (resp. $\overline{X_n}^\op$) to the top strings of $g_n$ (resp. the bottom strings of $g_n$).
\end{proof}

\begin{theo}\label{theo:TLJ_Ha}
Let $TLJ$ be the \TLJ\ \plal\ with any loop parameter $\delta\in \{2\cos(\frac{\pi}{n}),\ n\geqslant 3\}\cup [2:\infty)$.
Then $TLJ$ has the \Hapr.
\end{theo}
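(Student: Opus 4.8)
The plan is to produce, for every \lopa\ $\delta$ in the stated set, a CPAI (Definition~\ref{defi:CPAI}) for the \syenin\ $\MMop\subset\MboxM$ associated with $TLJ$, and then to invoke the first lemma above. Suppose first that $\delta=2\cos\frac{\pi}{m}$ for some $m\geqslant 3$. Then the $2n$th Jones--Wenzl idempotent $g_n$ vanishes as soon as $2n\geqslant m$, so by the preceding lemma $L^2(\MboxM)=\bigoplus_{n}L_n$ is a \emph{finite} orthogonal sum of $\MMop$-bimodules $L_n\cong X_n\ootimes\overline{X_n}^\op$, each of which is finite since $X_n$ is a finite $M_0$-bimodule. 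Hence $1$ is a finite projection of $\langle\MboxM,e_{\MMop}\rangle$, and the constant sequence $\varphi_l=\Id$ is a CPAI; thus $\MMop\subset\MboxM$ has the \reHapr\ and $TLJ$ has the \Hapr. From now on assume $\delta\in[2:\infty)$.

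Fix $t$ with $0<t<\delta$ and let $V(t)$ be the \HTLJmo\ of \lowe\ $0$ from Section~\ref{sec:TLJ_modules}, with unit vector $\xi(t)\in V(t)_0^+$. By Section~\ref{sec:modules}, $V(t)$ yields a \Hisp\ $\mH(t)$, a normal \stre\ $\pi_t:\MboxM\loriar\Bl(\mH(t))$, and $(\mH(t),\xi(t))$ is a Hilbert $(\MMop\subset\MboxM)$-bimodule; moreover, as in the proof of Proposition~\ref{prop:bounded_action}, $\omega_{\xi(t)}\circ\pi_t$ and its right-hand analogue both equal the trace $\tau$ of $\MboxM$, so $\xi(t)$ is a two-sided trace vector. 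Let $\varphi_t:\MboxM\loriar\MboxM$ be the associated normal $\MMop$-bimodular trace-preserving \ucp\ map, determined by $\tau(\varphi_t(x)y^*)=\langle x\cdot\xi(t)\cdot y^*,\xi(t)\rangle_{\mH(t)}$ for $x,y\in\MboxM$ (it lands in $\MboxM$, rather than merely in the bounded operators on $L^2(\MboxM)$, because the bimodule $\mH(t)$ is a compact one, being built from the finite-dimensional module $V(t)$). Its continuous extension $\Theta_t$ to $L^2(\MboxM)$ commutes with the left and right actions of $\MMop$; since, by the preceding lemma, $L^2(\MboxM)=\bigoplus_n L_n$ with the $L_n\cong X_n\ootimes\overline{X_n}^\op$ pairwise non-isomorphic and irreducible as $\MMop$-bimodules, it follows that $\Theta_t=\sum_n\mu_n(t)\,p_n$ for scalars $\mu_n(t)\in\C$, where $p_n$ is the orthogonal projection onto $L_n$.

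It remains to analyse $\mu_n(t)=\langle g_n\cdot\xi(t)\cdot g_n^*,\xi(t)\rangle_{\mH(t)}\big/\tau(g_n^*g_n)$, which is computed diagrammatically by capping the $2n$th Jones--Wenzl idempotent above and below with $\xi(t)$ and closing the picture up, using the identity $\langle\alpha(\xi(t)),\beta(\xi(t))\rangle=\delta^c t^{2d}$ with $c,d$ as in Section~\ref{sec:TLJ_modules}. The outcome should be $0\leqslant\mu_n(t)\leqslant 1$ (in any case $|\mu_n(t)|\leqslant 1$ because $\Theta_t$ is a contraction), $\lim_{t\to\delta}\mu_n(t)=1$ for each fixed $n$, and $\lim_{n\to\infty}\mu_n(t)=0$ for each fixed $t<\delta$. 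Granting this, for $x\in\MboxM$ whose $L^2$-class lies in $\bigoplus_{n\leqslant N}L_n$ one has $\Vert\varphi_t(x)-x\Vert_2^2=\sum_{n\leqslant N}(1-\mu_n(t))^2\Vert p_n\hat x\Vert^2\to 0$ as $t\to\delta$, and since $\Vert\varphi_t\VBT\leqslant 1$ this extends to all $x\in\MboxM$ by density; furthermore each $p_n$ is a finite projection of $\langle\MboxM,e_{\MMop}\rangle$ (as $L_n$ is a finite $\MMop$-bimodule), while $\Vert\sum_{n\geqslant N}\mu_n(t)\,p_n\Vert=\sup_{n\geqslant N}|\mu_n(t)|\to 0$, so $\Theta_t$ lies in the norm closure of the ideal generated by $e_{\MMop}$, i.e. in the compact ideal space of $\langle\MboxM,e_{\MMop}\rangle$. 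Choosing $t_l\nearrow\delta$, the sequence $(\varphi_{t_l})_l$ is therefore a CPAI, so $\MMop\subset\MboxM$ has the \reHapr\ and, by the first lemma above, $TLJ$ has the \Hapr. The heart of the argument --- and the one place where the planar-algebra input is really used --- is the diagrammatic identification of $\Theta_t$ as $\bigoplus_n\mu_n(t)\,p_n$ together with the verification of the three limiting properties of $\mu_n(t)$; the CPAI axioms then follow from the soft arguments just sketched.
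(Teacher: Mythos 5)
Your argument is structurally the same as the paper's: finite depth disposes of the case $\delta<2$, and for $\delta\geqslant 2$ you build the \ucp\ map $\varphi_t$ from the pointed bimodule attached to $V(t)$, diagonalize its $L^2$-extension over the decomposition $L^2(\MboxM)=\bigoplus_n L_n$ via Schur's lemma, and reduce everything to the eigenvalues $\mu_n(t)$. All the soft steps are right and essentially as in the paper: $\xi(t)$ is a two-sided trace vector, so $Z_t$ is an isometry; $Z_t^*\pi_t(\cdot)Z_t$ lands in $\MboxM$ because $Z_t$ intertwines the right $\MboxM$-actions (this, not any ``compactness'' of $\mH(t)$, is the correct reason); the density argument for $\Vert\varphi_t(x)-x\Vert_2\to 0$; and the compactness of $\Theta_t$ from finiteness of the $p_n$ together with $\sup_{n\geqslant N}|\mu_n(t)|\to 0$.

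The one genuine gap is exactly the point you flag with ``the outcome should be'' and ``granting this'': the evaluation of $\mu_n(t)$ and the verification of the two limits. This is the only place where the specific structure of $V(t)$ is used, and it cannot be waved through --- the whole content of the theorem is that such eigenvalues exist with $\mu_n(t)\to_n 0$ and $\mu_n(t)\to_{t\to\delta}1$. The paper completes this step: capping $g_n$ above and below with $\xi^t$ and applying $\langle\alpha(\xi(t)),\beta(\xi(t))\rangle=\delta^c t^{2d}$ turns the closed-up diagram into a Markov-type trace of the Jones--Wenzl idempotent in which non-contractible circles count $t$ instead of $\delta$; Wenzl's recursion \cite{Wenzl_projections,Jones_index_for_subfactors} then gives $\langle\phi_t(g_n),g_n\rangle=[2n+1]_\omega$ with $\omega+\omega^{-1}=t$, while $\langle g_n,g_n\rangle=\tau_{2n}(g_n)=[2n+1]_q$ with $q+q^{-1}=\delta$, so $\mu_n(t)=[2n+1]_\omega/[2n+1]_q$. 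From this explicit formula both limits are immediate: for $0<t<2$ the numerator equals $\sin((2n+1)\theta)/\sin\theta$ and is bounded in $n$, for $2\leqslant t<\delta$ it grows like $\omega^{2n}$ with $1\leqslant\omega<q$, whereas the denominator grows like $q^{2n}$ (like $2n+1$ when $\delta=2$), giving $\mu_n(t)\to 0$; and $\omega\to q$ as $t\to\delta$ gives $\mu_n(t)\to 1$. You need to supply this computation (or at least the resulting formula) for your argument to be a proof.
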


\begin{proof}
If $\delta=2\cos(\frac{\pi}{n})$ for some $n\geqslant 3$, then $TLJ$ has finite depth. Therefore, its \syenin\ is a \sufa\ of finite index. This implies that $TLJ$ has the \Hapr.
We assume that $\delta\geqslant2$.
We write $T=\MMop$ and $S=\MboxM$.
Consider $0<t<\delta$ and the pointed \HTLJmo\ $(V(t),\xi(t))$ of section \ref{sec:TLJ_modules} where $\xi(t)\in V(t)^+_0$ is a unit vector.
Let $(H^t,\xi^t)$ be its associated ($\TS$)-bimodule as constructed in section \ref{sec:modules}.
Let $Z_t:L^2(S)\loriar H^t$ be the continuous linear map densely defined as follows
$Z_t(x\Omega)=\xi^t\cdot x, \text{ for any } x\in S.$
Define the normal $T$-bimodular \ucp\ map $\phi_t:S\loriar S$ by the formula
$\phi_t(x)=Z_t^*\pi_t(x)Z_t$, where $\pi_t:S\loriar B(H^t)$ is the left action of $S$ on $H^t$.
We will show that the net $(\phi_t,\ 0<t<\delta)$ is the desired approximation of the identity.

Note, the $T$-bimodules $L_n$ are isomorphic to $X_n\ootimes \overline X_n^\op$ for any $n\geqslant 0$.
Hence, they are irreducible and pairwise non-isomorphic.
By Schur's Lemma, there exists a scalar valued function $c_t:\N\loriar\C$ such that $\Theta_t=\sum_{n\geqslant 0} c_t(n) s_n$, where $\Theta_t$ is the unique continuous extension of $\phi_t$ to $L^2(S)$ and $s_n$ is the orthogonal projection from $L^2(S)$ onto $L_n$.
We have the formula 
$$c_t(n)=\frac{\langle\phi_t(g_n),g_n\rangle}{\langle g_n,g_n\rangle}, \text{ for any } n\geqslant 0.$$
Let $\tau_{2n}$ be the non-normalized trace of the C$^*$-algebra $TLJ^+_{2n}$. 
Remark,
$\tau_{2n}(g_n)=\langle g_n,g_n\rangle, \text{ for any } n\geqslant 0.$
Let $q$ be the unique real number bigger than 1 satisfying $q+q^{-1}=\delta$.
It is well known that $\tau_{2n}(g_n)=[2n+1]_q$, where 
$$ [2n+1]_q = \frac{ q^{ 2n+1 } - q^{ -2n-1 } }{ q-q^{ -1 } } $$
is the 2n+1th quantum integer with parameter $q$ \cite[Section 5.1]{Jones_index_for_subfactors}.

We claim that 
\begin{equation}\label{equa:qnumber}
\langle\phi_t(g_n),g_n\rangle=[2n+1]_\omega, \text{ if } n \geqslant 1,
\end{equation}
where $\omega$ is a complex number satisfying $\omega+\omega^{-1}=t$.
Observe, 
\begin{align*}
\langle\phi_t(g_n),g_n\rangle&=\langle g_n\cdot \xi^t,\xi^t\cdot g_n\rangle=\langle g_n\cdot \xi^t \cdot g_n,\xi^t\rangle\\
&= \prodscal.
\end{align*}
Hence, proving the equality (\ref{equa:qnumber}) is a routine computation using the induction formula of \cite[Section 5.1]{Jones_index_for_subfactors} or \cite{Wenzl_projections}.
Therefore, $$c_t(n)=\frac{[2n+1]_\omega}{[2n+1]_q}\text{ for any } n\geqslant 0.$$
Observe,
\begin{align*}
&c_t(n)\loriar 1, \text{ as } t\rightarrow \delta, \text{ for any } n\geqslant 0, \text{ and}\\
&c_t(n)\loriar 0, \text{ as } n\rightarrow \infty, \text{ for any } 0<t<\delta.
\end{align*}
Note, $\tau\circ \phi_t=a_0\tau=\tau.$
Hence, any sequence of real numbers $(0<t_n<\delta,\ n \geqslant0)$ that converges to $\delta$ defines a CPAI $(\phi_{t_n},\ n \geqslant 0)$.
Therefore, $\TS$ has the relative Haagerup property.
\end{proof}

\bibliographystyle{alpha}

\end{document}